\newtheorem{theorem}{Theorem}[section]
\newtheorem{lemma}[theorem]{Lemma}
\newtheorem{defprop}[theorem]{Definition-Proposition}
\theoremstyle{definition}
\newtheorem{definition}[theorem]{Definition}
\theoremstyle{remark}
\theoremstyle{plain}
\newcommand{\thistheoremname}{}
\newtheorem{genericthm}[theorem]{\thistheoremname}
\newtheorem*{genericthm*}{\thistheoremname}
\newenvironment{namedthm*}[1]
  {\renewcommand{\thistheoremname}{#1}%
   \begin{genericthm*}}
  {\end{genericthm*}}
\newcommand{\bR}{\mathbb{R}}
\newcommand{\bZ}{\mathbb{Z}}
\newcommand\ba{\mathbf{a}}
\newcommand\bm{\mathbf{m}}
\newcommand\bn{\mathbf{n}}
\newcommand\bx{\mathbf{x}}
\newcommand\boldQ{\mathbf{Q}}
\newcommand\bzero{\mathbf{0}}
\newcommand{\btB}{{\mathbf{2B}}}
\newcommand{\sB}{\mathscr{B}}
\newcommand{\stB}{2\mathscr{B}}
\newcommand{\on}{\operatorname}
\newcommand{\comp}{C^2}
\renewcommand{\comp}{{\on{comp}}}
\newcommand{\mk}{{\on{mark}}}
\newcommand{\incom}{\on{in}}
\newcommand{\inte}{{\on{int}}}
\newcommand{\tree}{{\on{tree}}}
\newcommand{\br}{{\on{br}}}
\renewcommand{\min}{{\on{min}}}
\renewcommand{\max}{{\on{max}}}
\newcommand*\bigcdot{{\mathpalette\bigcdot@{.5}}}
\newcommand*\bigcdot@[2]{{\mathbin{\vcenter{\hbox{\scalebox{#2}{$\m@th#1\bullet$}}}}}}
\newcommand\qu{/\kern-.7ex/} 
\newcommand\lqu{\backslash \kern-.7ex \backslash}
\newcommand{\sr}{\stackrel}
\newcommand{\wh}{\widehat}
\newcommand{\wt}{\widetilde}
\begin{document}

\title{The 2-associahedra are Eulerian}
\author{Nathaniel Bottman}
\address{Department of Mathematics, University of Southern California,
3620 S Vermont Ave, Los Angeles, CA 90089}
\email{\href{mailto:bottman@usc.edu}{bottman@usc.edu}}
\author{Dylan Mavrides}
\address{Department of Mathematics, Princeton University, Fine Hall, Washington Rd, Princeton, NJ 08544}
\email{\href{mailto:mavrides@princeton.edu}{mavrides@princeton.edu}}

\maketitle

\begin{abstract} 
We show that the 2-associahedra are Eulerian, by exploiting their recursive structure.
\end{abstract}

\section{Introduction}

In \cite{b:2ass}, the first author introduced the \emph{2-associahedra}, a family $(W_\bn)$ of abstract polytopes which forms the algebraic underpinnings for the naturality structure on the Fukaya category \cite{b:realization,bc:Ainfty,bw:compactness,mww}.
While these posets originate in symplectic geometry, they are natural combinatorial objects, which can be thought of either as the posets of degenerations in the configuration spaces of marked vertical lines in $\bR^2$, or as the posets that control the coherences in an $(A_\infty,2)$-category.

A ranked poset is \emph{Eulerian} if every nontrivial interval has the same number of even-rank elements as it does odd-rank elements.
The face lattice of a convex polytope is an example, and many results about the combinatorics of convex polytopes can be extended to Eulerian posets.
Our main result is to show that the 2-associahedra are Eulerian:
\begin{theorem}
\label{thm:eulerian}
For every $r \geq 1$ and $\bn \in \bZ_{\geq0}^r\setminus\{\bzero\}$, the 2-associahedron $\wh{W_\bn} \coloneqq W_\bn \cup \{F_\bn^\min\}$ with a formal dimension-$(-1)$ minimal element is an Eulerian poset.
\end{theorem}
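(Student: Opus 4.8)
The plan is to prove the theorem by strong induction on $\dim W_\bn$ (equivalently, on a well-ordering of the pairs $(r,\bn)$), using the classical criterion that a finite graded poset $P$ with minimum $\hat 0$ and maximum $\hat 1$ is Eulerian precisely when its M\"obius function satisfies $\mu(x,y)=(-1)^{\rk(y)-\rk(x)}$ for all $x\le y$. I rank $\wh{W_\bn}$ by $\rk(F)=\dim F+1$, so that $F_\bn^\min$ has rank $0$ and the maximal element $W_\bn$ has rank $\dim W_\bn+1$. Two elementary inputs anchor the induction: products of Eulerian posets are Eulerian (rank is additive and the M\"obius function is multiplicative across a product), and every associahedron $K_m$ is Eulerian since it is the face lattice of a convex polytope. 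The base cases (the lowest-dimensional $W_\bn$, which are points, and the thin rank-$2$ intervals) are immediate.

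Next I would use the recursive structure of $W_\bn$ established in \cite{b:2ass} to dispatch every proper interval. Each closed face of $W_\bn$ is, as a poset, a product of strictly lower-dimensional $2$-associahedra $W_{\bn'}$ and associahedra $K_m$; hence for any proper face $y<\hat 1$ the lower interval $[\hat 0,y]$ is such a product and is Eulerian by the inductive hypothesis together with the two inputs above. Since any interval of an Eulerian poset is again Eulerian, this settles $[x,y]$ for every $y<\hat 1$. For the upper intervals $[x,\hat 1]$ with $x>\hat 0$ I would invoke the analogous recursive description of the link of a face, under which $[x,\hat 1]$ is again a product of $2$-associahedra and associahedra, now of total dimension $\dim W_\bn-\rk(x)<\dim W_\bn$, and so is fully Eulerian by the inductive hypothesis. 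Thus the only relation left unverified is $\mu(\hat 0,\hat 1)=(-1)^{\dim W_\bn+1}$ for the full poset.

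This last relation is where I expect the real difficulty. Substituting the now-known values $\mu(\hat 0,z)=(-1)^{\rk(z)}$ for $z<\hat 1$ into the defining identity $\sum_{\hat 0\le z\le\hat 1}\mu(\hat 0,z)=0$, the required relation is equivalent to the Euler--Poincar\'e identity $\sum_{j=0}^{\dim W_\bn}(-1)^j f_j=1$, where $f_j$ is the number of $j$-dimensional faces of $W_\bn$. Since the $2$-associahedra are not known to be realized by convex polytopes, this alternating face-count must be proved combinatorially, again from the recursion: I would organize the faces by their indexing trees and show that the alternating sum factors over the recursive decomposition into contributions of smaller $2$-associahedra and associahedra, each contributing $1$ by the inductive hypothesis, so that the total collapses to $1$. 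Pinning down the correct statistic on the trees and verifying that the signs interact with the recursion as claimed --- most likely by exhibiting a sign-reversing involution that cancels all faces but one, or by a clean generating-function factorization --- is the heart of the matter; the inductive scaffolding above reduces the entire theorem to precisely this Euler-characteristic computation.
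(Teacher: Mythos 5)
Your overall scaffolding (reduce to lower intervals, upper intervals, and one global Euler relation) parallels the paper's architecture, but the proposal leaves precisely the hard content unproven, and two of the reductions fail as stated. First, the global identity $\sum_j(-1)^jf_j(W_\bn)=1$ --- which you correctly isolate and then defer to ``a sign-reversing involution or a clean generating-function factorization'' --- is not a routine final step; it is the heart of the paper. Proving it occupies all of \S\ref{s:equations} and \S\ref{ss:Wn-hat}: one must introduce the refined generating functions $F_T(t,\bx)$ counting faces of $W_\bn$ lying over a fixed $T\in K_r$, derive the functional equations of Thm.~\ref{thm:gen_functions} from a concatenation operation on tree-pairs, and then evaluate at $t=-1$ (Lemma~\ref{lem:FT_at_-1}) to obtain $F_T(-1,\bx)=(-1)^{d(T)}\bigl(\prod_i(1-x_i)^{-1}-1\bigr)$, from which balancedness of $\wh{W_\bn}$ follows (Lemma~\ref{lem:Wn-hat_balanced}). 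Naming two candidate methods is a restatement of the problem, not a proof, so the proposal is incomplete at its central point.

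Second, the structural claims feeding your induction are wrong or unsupported. A lower interval $[F_\bn^\min,2T]$ is not a product of smaller $2$-associahedra and associahedra: by \eqref{eq:fiber_product_decomp} it is a reduced product of \emph{fiber products} $W_{\bm_1}\times_{K_r}\cdots\times_{K_r}W_{\bm_k}$ over the forgetful maps to associahedra. Multiplicativity of the M\"obius function applies to genuine products, not to fiber products, so ``products of Eulerian posets are Eulerian'' does not close this step; the paper instead needs Lemma~\ref{lem:A_fib_prod_of_Wms}, whose proof requires knowing that the alternating sum over each fiber $\pi^{-1}(T)$ equals $(-1)^{d(T)}$ --- again a consequence of Lemma~\ref{lem:FT_at_-1}, not of the inductive hypothesis that smaller $\wh{W_\bm}$ are Eulerian. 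Likewise, for upper intervals $[x,\hat 1]$ you invoke ``the analogous recursive description of the link of a face,'' but no such product description of links in $2$-associahedra is available (the paper neither provides nor cites one); instead it proves these intervals balanced by a separate, rather delicate induction on $2$-bracketings (Lemma~\ref{lem:remaining_intervals_superlevel_set}), decomposing according to removable $2$-brackets and taking care with the \textsc{(marked seams are unfused)} condition. So both halves of your inductive step, as well as the final Euler-characteristic computation, need arguments that the proposal does not supply.
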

\noindent
One motivation for this theorem is one can associate to an Eulerian poset an important invariant called the \emph{$cd$-index}, which is a concise encoding of the flag $f$-vector \cite{stanley:cd}.
By using the recursive structure of the 2-associahedra as described in \S\ref{s:equations}, we hope to compute the $cd$-indices of the 2-associahedra algorithmically in future work.
Another motivation for Thm.~\ref{thm:eulerian} is that it provides further evidence that the 2-associahedra can be realized as the face lattices of convex polytopes.

We now describe the plan for our paper.
\begin{enumerate}
\item[\bf\S\ref{s:equations}:]
In this section, we define a collection of generating functions $F_T$, where $F_T$ counts the elements of the 2-associahedra that project to a given element $T$ of an associahedron.
The main result of \S\ref{s:equations} is Thm.~\ref{thm:gen_functions}, which reformulates the recursive structure of the 2-associahedra as a set of equations satisfied by the collection $\bigl(F_T\bigr)$.

\medskip

\item[\bf\S\ref{s:eulerian}:]
We exploit Thm.~\ref{thm:gen_functions} to prove Thm.~\ref{thm:eulerian}.
Specifically, we divide the proof of Thm.~\ref{thm:eulerian} among \S\S\ref{ss:Wn-hat}--\ref{ss:remaining_intervals} like so:
\medskip
\begin{enumerate}
\item[\bf\S\ref{ss:Wn-hat}:]
In this subsection, we use Thm.~\ref{thm:gen_functions} to show that the alternating sum over the completed 2-associahedron $\wh{W_\bn}$ is zero (Lemma~\ref{lem:Wn-hat_balanced}).
The key is to note that since the variable $t$ in $F_T$ tracks the dimension, we can compute the alternating sum by specializing to $t=-1$.

\medskip

\item[\bf\S\ref{ss:fib_prods}:]
Next, we use Lemma~\ref{lem:Wn-hat_balanced} to show that the completion of any fiber product $W_{\bm_1} \times_{K_r} \cdots \times_{K_r} W_{\bm_k}$ is balanced (Lemma~\ref{lem:A_fib_prod_of_Wms}), where we are using the forgetful map $W_{\bm_i} \to K_r$ defined in \cite[Thm.~4.1]{b:2ass}.
This, plus the recursive structure of 2-associahedra (see \eqref{eq:fiber_product_decomp}), allows us to show that the alternating sum over any interval in $\wh{W_\bn}$ of the form $[F_\bn^\min,2T]$ is zero (Lemma~\ref{lem:F_min_2T_balanced}).

\medskip

\item[\bf\S\ref{ss:remaining_intervals}:]
Finally, we show in Lemma~\ref{lem:remaining_intervals} that any interval in $W_\bn$ has alternating sum zero.
Together with Lemma~\ref{lem:F_min_2T_balanced}, this completes the proof of Thm.~\ref{thm:eulerian}.
The proof of Lemma~\ref{lem:F_min_2T_balanced} is somewhat involved, but is conceptually simpler than that of Lemma~\ref{lem:F_min_2T_balanced} because an interval in $W_\bn$ (as opposed to an interval in $\wh{W_\bn}$ with lower bound $F_\bn^\min$) essentially decomposes into a product, with one term for each bracket.
\end{enumerate}
\end{enumerate}

In this paper, we will assume all the definition, notation, and results from \cite{b:2ass}.

\subsection{Acknowledgments}

The first author thanks Richard Ehrenborg, Margaret Readdy, and Lauren Williams for useful conversations.
The first author was supported by an NSF Mathematical Sciences Postdoctoral Research Fellowship and by an NSF Standard Grant (DMS-1906220).

\section{Generating functions associated to 2-associahedra}\label{s:equations}

In this section, we will study the following generating functions:
\begin{gather}
f(t,x)
\coloneqq
\sum_{m \geq 0, r \geq 1} a_{mr}t^mx^r,
\quad
a_{mr}
\coloneqq
\#\left\{T \in K_r \:|\: d(T) = m\right\} \eqqcolon K_{m,r},
\\
F_T(t,\bx)
\coloneqq
\sum_{{m \geq 0,}\atop{\bn \in \bZ_{\geq0}^r\setminus\{\bzero\}}} A_{T,m,\bn}t^m\bx^\bn, \quad A_{T,m,\bn}
\coloneqq
\#\left\{2T \in W_\bn \:\left|\: {{\pi(2T) = T,}\atop{d(2T) = m}}\right.\right\}
\eqqcolon
\!W_{T,m,\bn},
\end{gather}
where $T \in K_r$ is a stable rooted ribbon tree (RRT).
That is, $f$ counts faces of the associahedra, and $F_T$ counts faces of the 2-associahedra whose image under the forgetful map is $T$.
For instance, $K_4$ is CW-isomorphic to a pentagon, so $f$ should include the terms  $5x^4$, $5tx^4$, and $t^2x^4$.

Our main result in this section is the following characterization of $f$ and $F_T$, which will be the key input to our proof that $W_\bn$ is Eulerian.

\begin{theorem}
\label{thm:gen_functions}
(a) $f$ is characterized uniquely by the following equations:
\begin{align}
\label{eq:RRT_gen_eq}
f = \frac{f^2}{1 - t f} + x, \quad \frac{\partial f}{\partial x}(0,0) = 1.
\end{align}
(b)
The collection $\bigl(F_T(t,x)\bigr)_{{r\geq1,}\atop{T \in K_r}}$ satisfies the following equations:
\begin{gather}
\label{eq:tree-pair_gen_eq}
F_\bigcdot
=
\frac{F_\bigcdot^2}{1-tF_\bigcdot} + x,
\\
F_T
=
\frac {F_T^2}{t^p-tF_T}
+
t^{p-1}\left(\frac{t^{p_1}}{t^{p_1}-tF_{T_1}}\cdots\frac{t^{p_k}}{t^{p_k}-tF_{T_k}} - 1\right),
\quad
T = C(T_1,\ldots,T_k),
\nonumber
\end{gather}
where in the latter equation, we have set $p \coloneqq d(T)$, $p_i \coloneqq d(T_i)$.
\end{theorem}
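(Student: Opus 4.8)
The plan is to read both families of identities off the recursive facet structure of the (2-)associahedra from \cite{b:2ass}, using that the exponent of $t$ records the dimension $d$. I will use the description of a face of $K_r$ as a stable RRT, of a face of $W_\bn$ over $T$ as the tree $T$ decorated by a compatible system of vertical brackets, the forgetful map $\pi$, and the forgetful maps $W_{\bm_i}\to K_r$ of \cite[Thm.~4.1]{b:2ass}.

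For (a) I would split a stable RRT according to its root vertex: either it is the single leaf $\bigcdot$, contributing $x$, or the root is a $k$-valent vertex with $k\ge 2$ children $T_1,\dots,T_k$, each an arbitrary stable RRT. A $k$-valent vertex is the top cell of $K_k$, so it contributes $d=k-2$, i.e.\ a factor $t^{k-2}$, while the children contribute independently; thus $f=x+\sum_{k\ge2}t^{k-2}f^k=x+\frac{f^2}{1-tf}$. For uniqueness, the normalization $\partial_x f(0,0)=1$ selects, among the two formal-power-series solutions of the resulting quadratic, the one with vanishing constant term (the other has $\partial_x f(0,0)=-1$); for that solution $\frac{f^2}{1-tf}=O(x^2)$, so comparing coefficients of $x^r$ determines each $a_{m,r}$ from data of strictly smaller $x$-degree, and induction on $r$ finishes.

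For the first equation of (b) I would identify the fiber over the single leaf: a single line with $n$ marked points is the associahedron, $W_{(n)}\cong K_n$ \cite{b:2ass}, whence $F_\bigcdot=f$ and the equation is \eqref{eq:RRT_gen_eq}. The substance is the second equation, which I would prove by classifying the faces $2T$ over $T=C(T_1,\dots,T_k)$ by their outermost vertical structure. In the \emph{first case} the outermost brackets form a vertical stack of $l\ge2$ regions, each itself a face over $T$; such a stack is precisely a fiber product $W_{\bm_1}\times_{K_r}\cdots\times_{K_r}W_{\bm_l}$, and summing over $l\ge2$ gives $\frac{F_T^2}{t^p-tF_T}=\sum_{l\ge2}t^{-p+(1-p)(l-2)}F_T^{l}$. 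In the \emph{second case} there is a single outermost region spanning all $r$ lines; the root corolla groups the lines into the subtrees $T_1,\dots,T_k$, and over the $i$-th group the marked points form a vertical stack of $j_i\ge0$ faces over $T_i$, these choices being independent across $i$. This yields $\prod_i\sum_{j_i\ge0}(t^{1-p_i}F_{T_i})^{j_i}=\prod_i\frac{t^{p_i}}{t^{p_i}-tF_{T_i}}$, and the prefactor $t^{p-1}$ together with the subtraction of the empty configuration (all $j_i=0$, the $-1$) completes the term. I would check both cases against the interval $W_{(1,1)}$, where the first case produces the two vertices and the second the top edge.

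The main obstacle is the dimension bookkeeping: I must verify that the exponent of $t$ attached to each configuration really equals $d(2T)$. For a second-case face built from pieces $P_{i,s}$ this is the identity $d(2T)=(p-1)+\sum_i j_i(1-p_i)+\sum_{i,s}d(P_{i,s})$, and there is an analogous identity in the first case; these are exactly what force the per-region shift $t^{1-p_i}$, the global $t^{p-1}$, and the $t^{-p}$ in $\frac{F_T^2}{t^p-tF_T}$. Proving them amounts to unwinding the gluing/dimension formula for faces of $W_\bn$ from \cite{b:2ass}, and one must also check that the first/second-case dichotomy is a genuine bijection with no double-counting along the interface between the two types. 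Once these geometric facts are in place, the passage to generating functions is a routine resummation of geometric series.
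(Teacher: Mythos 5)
Your proposal is correct and follows essentially the same route as the paper: the same dichotomy at the root (a vertical stack of $\ell \geq 2$ faces over $T$ itself, versus a single outermost region with independent stacks of $a_i \geq 0$ faces over each branch $T_i$, minus the empty configuration), the same dimension identity (your bookkeeping formula is exactly \eqref{eq:dim_two-concat}, which the paper verifies in Def.-Prop.~\ref{defprop:2concat}), and the same resummation into \eqref{eq:tree-pair_gen_eq}. The only cosmetic difference is your uniqueness argument in part (a), which proceeds by induction on the $x$-degree of the coefficients, whereas the paper solves the quadratic explicitly and distinguishes the two roots by $\tfrac{\partial f}{\partial x}(0,0)$.
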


\subsection{Counting faces in $K_r$}

In this subsection we will prove part (a) of Thm.~\ref{thm:gen_functions}.
We begin by defining the operation of concatenation.
Set $T_r^\max$ to be the unique element of $K_r$ with $d(T_\max^r) = r-2$.
(That is, $T_\max^1$ is a single vertex, and for $r\geq2$, $T_\max^r$ is a root vertex with $r$ incoming neighbors.)

\begin{defprop}
\label{defprop:concat}
Fix $k\geq1$ and a $k$-tuple $(T_i \in K_{p_i,q_i})_{1\leq i\leq k}$ of stable RRTs.
Then the \emph{concatenation} of this data is the stable RRT $C(T_1,\ldots,T_r)$ defined by attaching $T_1,\ldots,T_r$ to the $r$ leaves of $T_\max^r$.
The concatenated tree has $\sum_{i=1}^k q_i$ leaves and the following dimension:
\begin{align}
d\bigl(C(T_1,\ldots,T_r)\bigr) = \begin{cases}
p_1, & k=1, \\
\sum_{i=1}^k p_i + k - 2, & k \geq 2.
\end{cases}
\end{align}
\end{defprop}

\begin{proof}
The $k=1$ case is trivial, so from now on we assume $k\geq 2$.

Set $T \coloneqq C(T_1,\ldots,T_r)$.
We must establish the formula $d(T) = \sum_{i=1}^k p_i + k - 2$ and show that $T$ has $\sum_{i=1}^k q_i$ leaves.
The latter equality follows immediately from the definition of $T$.
The former equality follows from a calculation:
\begin{align}
d(T) = \sum_{i=1}^k q_i - \#T_\inte - 1 = \sum_{i=1}^k q_i - \left(\sum_{i=1}^k \#(T_i)_\inte + 1\right) - 1 &= \sum_{i=1}^k \left(q_i - \#(T_i)_\inte - 1\right) + k - 2 \nonumber
\\
&= \sum_{i=1}^k p_i + k - 2.
\end{align}
\end{proof}

We are now ready to prove part (a) of Thm.~\ref{thm:gen_functions}.

\begin{proof}[Proof of Thm.~\ref{thm:gen_functions}(a)]
{\bf Step 1:}
{\it $f$ satisfies \eqref{eq:RRT_gen_eq}.}

\medskip

\noindent
For $m \geq 0, r \geq 2$, there is a bijective correspondence between the following two sets:
\begin{align}
\label{eq:RRT_decomp}
K_{m,r} \quad\longleftrightarrow\quad \bigsqcup_{k=2}^\infty \: \bigsqcup_{\substack{p_1+\cdots+p_k = m-k+2
\\
q_1+\cdots+q_k=r \\
p_i \geq 0, \: q_i \geq 1}} K_{p_1,q_1} \times \cdots \times K_{p_k,q_k}
\end{align}
Indeed, the right-to-left direction of this correspondence is defined by concatenation, as in Def.-Prop.~\ref{defprop:concat}, and the left-to-right direction follows from the observation that concatenation is reversible.

For $m \geq 0, r \geq 2$, the following recursion is an immediate consequence of \eqref{eq:RRT_decomp}:
\begin{gather}
a_{mr} = \sum_{k=2}^\infty \: \sum_{\substack{p_1+\cdots+p_k=m-k+2
\\
q_1+\cdots+q_k=r
\\
p_i \geq 0, \: q_i \geq 1}} a_{p_1 q_1}\cdots a_{p_k q_k}.
\end{gather}
Moreover, we have $a_{01} = 1$ and $a_{m1} = 0$ for $m \geq 1$.
These equations in $(a_{mr})$ imply the following eq uation in $f$:
\begin{align}
f = \sum_{k = 2}^\infty t^{k-2}f^k + x,
\end{align}
which can be rewritten as
\begin{align}
f = \frac{f^2}{1 - t f} + x.
\end{align}
The equation $a_{01}=1$ is equivalent to $\tfrac{\partial f}{\partial x}(0,0) = 1$, so $f$ satisfies \eqref{eq:RRT_gen_eq}.

\medskip

\noindent
{\bf Step 2:}
{\it \eqref{eq:RRT_gen_eq} has only one solution.}

\medskip

\noindent
Next, we show that \eqref{eq:RRT_gen_eq} has only one solution.
The first equation in \eqref{eq:RRT_gen_eq} has two solutions:
\begin{align}
f_\pm \coloneqq \frac{1+tx\pm\sqrt{1-4x-2tx+t^2x^2}}{2(1+t)}.
\end{align}	
Then $f_\pm$ has $\tfrac{\partial f_\pm}{\partial x}(0,0) = \mp1$, so $f_- = f$ is the unique solution of \eqref{eq:RRT_gen_eq}.
\end{proof}

\subsection{Counting faces in \texorpdfstring{$W_\bn$}{2Mn}}

As in the previous section, we will now characterize the family $\bigl(F_T(t,\bx)\bigr)_{{r\geq1,}\atop{T \in K_r}}$ in terms of equations that the functions in this family satisfy.
We begin with a concatenation operation on tree-pairs.
For any $r\geq 0$ and $\bn \in \bZ_{\geq0}^r\setminus\{\bzero\}$, let $2T_\bn^\max$ denote the unique element of $W_\bn$ with $d(2T_\max^\bn) = |\bn| + r - 3$.

\begin{defprop}
\label{defprop:2concat}
Fix $k\geq 1$, $\ba \in \bZ_{\geq0}^k\setminus\{\bzero\}$, a $k$-tuple of stable RRTs $(T_i \in K_{p_i,q_i})_i$, and for every $i \in [1,r]$ an $a_i$-tuple of stable tree-pairs $(2T_{ij} \in W_{T_i,P_i,\boldQ_{ij}})_j$.
Then the \emph{concatenation} of this data is the stable tree-pair $2C\bigl((T_i),(2T_{ij})\bigr) = \bigl(T_b \sr{\pi}{\to} T_s\bigr)$ defined in the $k=1, a_1=1$ case to be $2T_{11}$, and in the remaining cases like so:
\begin{itemize}
\item $T_s$ is the concatenation $C\bigl((T_i)\bigr)$ as in Def.-Prop.~\ref{defprop:concat}.

\item We define $T_b$ by attaching the root of $2T_{ij}$ to $\mu_{ij} \in V_\mk(2T_\ba^\max)$.
\end{itemize}
In the case $\ba \neq (1)$, $2C\bigl((T_i),(2T_{ij})\bigr)$ is a stable tree-pair of type $\bigl(\sum_{j=1}^{a_1} \boldQ_{1j},\ldots,\sum_{j=1}^{a_k} \boldQ_{kj}\bigr)$, and has the following dimension:
\begin{align}
\label{eq:dim_two-concat}
d\Bigl(2C\bigl((T_i),(2T_{ij})\bigr)\Bigr) = \sum_{i=1,j=1}^{k,a_i} P_{ij} - \sum_{i=1}^k (a_i-1)p_i + |\ba| + k - 3.
\end{align}
\end{defprop}

\begin{proof}
The only thing we need to check is the dimension formula \eqref{eq:dim_two-concat}.
\begin{align}
d\Bigl(2C\bigl((T_i),(T_{ij})\bigr)\Bigr)
&=
\sum_{i=1,j=1}^{k,a_i} |\boldQ_{ij}| + \sum_{i=1}^k q_i - \#V_\comp^1(T_b) - \#(T_s)_\inte - 2
\nonumber
\\
&= \sum_{i=1,j=1}^{k,a_i} |\boldQ_{ij}| + \sum_{i=1}^k q_i - \biggl(\,\sum_{i=1,j=1}^{k,a_i} \#V_\comp^1(T_{b,ij}) + \delta_{k1}\biggr) \\
&\hspace{2.8in} - \biggl(\sum_{i=1}^k \#(T_{s,i})_\inte + (1-\delta_{k1})\biggr) - 2
\nonumber
\\
&= \sum_{i=1,j=1}^{k,a_i} \Bigl(|\boldQ_{ij}| + q_i - \#V_\comp^1(T_{b,ij}) - \#(T_{s,i})_\inte - 2\Bigr)
\nonumber
\\
&\hspace{1.75in}
-
\sum_{i=1}^k (a_i-1)\bigl(q_i - \#(T_{s,i})_\inte - 1\bigr)
+ \sum_{i=1}^k a_i + k - 3
\nonumber
\\
&= \sum_{i=1,j=1}^{k,a_i} P_{ij} - \sum_{i=1}^k (a_i-1)p_i + |\ba| + k - 3.
\nonumber
\end{align}
\end{proof}

Now that we have defined concatenation for tree-pairs, we are ready to derive a collection of equations that $\bigl(F_T(t,\bx)\bigr)$ satisfies.

\begin{proof}[Proof of Thm.~\ref{thm:gen_functions}(b).]
By \cite[Lemma~3.9]{b:2ass}, the following sets are in bijection for $m \geq 0, n \geq 1$:
\begin{align}
\label{eq:dot_decomp}
W_{\bigcdot,m,n} \simeq K_{m,n}.
\end{align}
\noindent It follows from this sublemma that $A_{\bigcdot,m,n} = a_{m,n}$, hence that $F_\bigcdot = f$, where $f$ is the generating function defined in the last subsection.
That $F_\bigcdot$ satisfies the first equation in \eqref{eq:tree-pair_gen_eq} now follows from Thm.~\ref{thm:gen_functions}(a).

Next, fix $r \geq 2$ and $T \in K_r$, and write $T = C(T_1,\ldots,T_k)$.
The following sets are then in bijection:
\begin{align}
\label{eq:W_recursion}
W_{T,m,\bn}
&\simeq
\bigsqcup_{a\geq2} \:\: \prod_{{{1\leq j\leq a,}\atop{\sum m_j = m + (a-1)p - a + 2}}\atop{\bn = \sum \boldQ_j}} W_{T,m_j,\boldQ_j}
\\
&\hspace{1.5in}
+
\bigsqcup_{\ba \in \bZ_{\geq0}^k\setminus\{\bzero\}} \prod_{{{1\leq i\leq k,\:1\leq j\leq a_i,}
\atop
{(\sum \boldQ_{1j},\ldots,\sum \boldQ_{kj}) = \bn,}}
\atop
{\sum m_{ij} = m + \sum a_i(p_i-1) - \sum p_i - k + 3}} W_{T_i,m_{ij},\boldQ_{ij}},
\nonumber
\end{align}
where we have denoted $p \coloneqq d(T)$.
This follows from concatenation of tree-pairs, analogously with \eqref{eq:RRT_decomp}.
Indeed, either the root of $T_b$ has one solid incoming edges, which in turn has $\ell \geq 2$ dashed incoming edges; or it has $k \geq 2$ solid incoming edges, which in turn have $\ell_1,\ldots,\ell_k$ dashed incoming edges.
Note that in the latter case, from any one of the dashed edges attached to the $j$-th solid incoming edge of the root we can extract a picture $T_b' \sr{f'}{\to} T_j$, where $T_j$ is the $j$-th branch of $T$.
\eqref{eq:W_recursion} implies the following equality on generating functions:
\begin{align}
F_T
=
t^{-p+2}\sum_{a=2}^\infty
t^{a(-p+1)}F_T^a
+
t^{\sum p_i + k - 3}
\Biggl(
\prod_{i=1}^k
\sum_{a_i \geq 0}
t^{a(-p_i+1)}F_{T_i}^a
- 1
\Biggr)
,
\end{align}
which is equivalent to the equation
\begin{align}
F_T
=
t^{p-2}\frac{t^{-2p+2}F_T^2}{1-t^{-p+1}F_T}
+
t^{\sum_i p_i + k - 3}
\biggl(
\frac1{1-t^{-p_1+1}F_{T_1}}\cdots\frac1{1-t^{-p_k+1}F_{T_k}}
-
1
\biggr),
\end{align}
which simplifies to
\begin{align}
F_T
=
\frac {F_T^2}{t^p-tF_T}
+
t^{p-1}\left(\frac{t^{p_1}}{t^{p_1}-tF_{T_1}}\cdots\frac{t^{p_k}}{t^{p_k}-tF_{T_k}} - 1\right).
\end{align}
\end{proof}

\section{The 2-associahedra are Eulerian}
\label{s:eulerian}

In this section, we will use the functional equations we derived in Thm.~\ref{thm:gen_functions} to prove Thm.~\ref{thm:eulerian}.
We will split our proof into three parts:
\begin{enumerate}
\item
In Lemma~\ref{lem:Wn-hat_balanced}, \S\ref{ss:Wn-hat}, we use Thm.~\ref{thm:gen_functions} to show that $\wh{W_\bn}$ is balanced.

\item
In Lemma~\ref{lem:A_fib_prod_of_Wms}, \S\ref{ss:fib_prods}, we use Lemma~\ref{lem:Wn-hat_balanced} and the recursive structure of 2-associahedra to show that any reduced fiber product $W_{\bm_1}\wt\times_{K_r}\cdots\wt\times_{K_r} W_{\bm_k}$ is balanced.

\item
Finally, we show in Lemma~\ref{lem:remaining_intervals}, \S\ref{ss:remaining_intervals} that all remaining intervals (i.e.\ those intervals in $W_\bn$ with lower bound not equal to $F_\bn^\min$) are balanced.
\end{enumerate}

\noindent
Putting these results together, we now prove Thm.~\ref{thm:eulerian}.

\begin{proof}[Proof of Thm.~\ref{thm:eulerian}]
Fix $F_1, F_2, \in W_\bn$ with $F_1 < F_2$.
If $F_1 = F_\bn^\min$, then Lemma~\ref{lem:A_fib_prod_of_Wms} implies that $[F_1,F_2]$ is balanced.
Otherwise, it follows from Lemma~\ref{lem:remaining_intervals} that $[F_1,F_2]$ is balanced.
\end{proof}

\subsection{Definitions and basic facts about ranked posets}

\begin{definition}
A \emph{ranked poset} is a poset $P$ together with a rank function $d\colon P \to \bZ$, such that:
\begin{itemize}
\item
$x < y$ implies $d(x) < d(y)$.

\item
If $y$ covers $x$, then $d(y) = d(x)+1$.
\end{itemize}
For any elements $x < y$ in $P$, we define the \emph{alternating sum} of the interval $[a,b]$ to be
\begin{align}
A\bigl([x,y]\bigr)
\coloneqq
\sum_{z \in [x,y]}
(-1)^{d(z)}.
\end{align}
An interval $[x,y]$ is \emph{balanced} if we have $A\bigl([x,y]\bigr) = 0$, and $P$ is \emph{Eulerian} if, for every $x < y$, $[x,y]$ is balanced.
\null\hfill$\triangle$
\end{definition}

\begin{definition}[Reduced Product]
Given two ranked posets $P$ and $Q$, we define their \textit{reduced product} to be the poset
\begin{align}
P \:\wt\times\: Q
\coloneqq
((P_{\textrm{min}},P_{\textrm{max}}]\times (Q_{\textrm{min}},Q_{\textrm{max}}])\cup\{(P_{\textrm{min}}, Q_{\textrm{min}})\},
\end{align}
with partial order and rank induced by the product poset for the left side of the union, and rank $d(P_{\textrm{min}}) + d(Q_{\textrm{min}}) + 1$ for the newly-constructed minimal element.
Note that here we choose the convention that the induced rank for other elements be the sum of their ranks.
By iterating this operation, we can define the reduced product of an arbitrary number of ranked posets.
\null\hfill$\triangle$
\end{definition}

\begin{lemma}
\label{lem:redprod}
For any ranked posets $P_1,\ldots,P_k$, the identity $A\bigl(P_1 \:\wt\times\: \cdots \:\wt\times\: P_k\bigr) = A(P_1)\cdots A(P_k)$ holds.
\end{lemma}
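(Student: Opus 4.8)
The plan is to prove the identity by induction on $k$, reducing immediately to the case $k=2$. The key observation is that the alternating sum of a ranked poset is linear in the sense that it factors over Cartesian products once we keep careful track of the rank conventions. So first I would establish the $k=2$ identity $A(P \:\wt\times\: Q) = A(P)A(Q)$ directly, and then the general case follows by applying the two-factor case repeatedly, using the associativity of $\wt\times$ that is implicit in the definition.

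For the base case, I would unwind the definition of $P \:\wt\times\: Q$. Its underlying set is $\bigl((P_\min,P_\max] \times (Q_\min,Q_\max]\bigr) \cup \{(P_\min,Q_\min)\}$, so the alternating sum splits into two contributions:
\begin{align}
A(P \:\wt\times\: Q)
=
(-1)^{d(P_\min)+d(Q_\min)+1}
+
\sum_{\substack{p \in (P_\min,P_\max] \\ q \in (Q_\min,Q_\max]}}
(-1)^{d(p)+d(q)}.
\end{align}
The first term is the contribution of the new minimal element, whose rank is $d(P_\min)+d(Q_\min)+1$ by fiat; the second is the contribution of the product part, where the induced rank is $d(p)+d(q)$. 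The double sum factors as $\Bigl(\sum_{p \in (P_\min,P_\max]} (-1)^{d(p)}\Bigr)\Bigl(\sum_{q \in (Q_\min,Q_\max]} (-1)^{d(q)}\Bigr)$. Now I would recognize each factor as an alternating sum over the \emph{full} poset minus its minimal element: writing $\sum_{p \in (P_\min,P_\max]}(-1)^{d(p)} = A(P) - (-1)^{d(P_\min)}$, and similarly for $Q$, the double sum becomes $\bigl(A(P)-(-1)^{d(P_\min)}\bigr)\bigl(A(Q)-(-1)^{d(Q_\min)}\bigr)$.

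Expanding that product and combining with the minimal-element term is the crux of the computation. The product expands to
\begin{align}
A(P)A(Q)
-
(-1)^{d(Q_\min)}A(P)
-
(-1)^{d(P_\min)}A(Q)
+
(-1)^{d(P_\min)+d(Q_\min)},
\end{align}
and the new minimal element contributes $(-1)^{d(P_\min)+d(Q_\min)+1} = -(-1)^{d(P_\min)+d(Q_\min)}$. The last term of the expansion cancels exactly against this minimal-element contribution, so I am left with $A(P)A(Q) - (-1)^{d(Q_\min)}A(P) - (-1)^{d(P_\min)}A(Q)$. This is not yet $A(P)A(Q)$, so the main obstacle is resolving the two leftover cross terms. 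The resolution must come from the fact that a ranked poset always contains its minimal element as the unique rank-minimal element: in any poset $P$ with a bottom element $P_\min$, the alternating sum $A(P)$ over the whole poset is what we want, but the cross terms $(-1)^{d(Q_\min)}A(P)$ and $(-1)^{d(P_\min)}A(Q)$ vanish precisely when $A(P)=0$ or $A(Q)=0$ — which is not assumed. I therefore expect the correct statement and convention to force these terms away through a normalization I should pin down carefully: most likely the intended convention is that $A$ here is taken over the half-open interval $(P_\min,P_\max]$ rather than the closed poset, or equivalently that $A(P)$ already excludes the minimal element, in which case the double sum \emph{is} $A(P)A(Q)$ and the new minimal element is exactly what restores the closed-interval count. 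Reconciling the two reasonable conventions — closed poset versus the half-open convention baked into the reduced product — is the one subtle point; once the convention is fixed so that $A(P)A(Q)$ counts the product part and the grafted minimum supplies the missing bottom contribution, the base case is immediate and the induction on $k$ is purely formal.
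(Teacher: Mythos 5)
Your computation is correct, and it follows exactly the same route as the paper's proof: isolate the contribution of the grafted minimum, and factor the remaining double sum over $(P^\min,P^\max]\times(Q^\min,Q^\max]$. But the obstruction you ran into is not a convention you failed to pin down --- it is a genuine defect in the statement. As written, for arbitrary ranked posets, the identity is \emph{false}. Take $P=Q$ to be the chain $a<b<c$ with ranks $0,1,2$, so $A(P)=A(Q)=1$; then $P\:\wt\times\:Q$ consists of elements of ranks $2,3,3,4$ together with a minimum of rank $1$, so $A\bigl(P\:\wt\times\:Q\bigr)=-1\neq 1$. Your leftover expression $-(-1)^{d(Q^\min)}A(P)-(-1)^{d(P^\min)}A(Q)$ is exactly the error term, with one slip: it vanishes when $A(P)=0$ \emph{and} $A(Q)=0$ (or in the coincidental case $(-1)^{d(Q^\min)}A(P)=-(-1)^{d(P^\min)}A(Q)$), not ``or'' --- e.g.\ with $P$ the $3$-chain and $Q$ the $2$-chain one has $A(Q)=0$ but $A\bigl(P\:\wt\times\:Q\bigr)=-1\neq 0=A(P)A(Q)$.

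For comparison, here is how the paper's own proof handles this point: it doesn't. After factoring the min-deleted sum as $S_1S_2$ with $S_i=\sum_{F\in P_i\setminus\{P_i^\min\}}(-1)^{d(F)}$ (your computation), it asserts $S_1S_2=(-1)^{d(P_1^\min)}(-1)^{d(P_2^\min)}$ (written, with typos, as ``$d(P_1^\min)d(P_2^\min)$''), which is precisely the unstated hypothesis $S_i=-(-1)^{d(P_i^\min)}$, i.e.\ $A(P_i)=0$. So the true statement, and the one the paper's argument actually proves, is: \emph{if every factor is balanced, then so is the reduced product} --- indeed $\prod_i S_i=(-1)^k(-1)^{\sum_i d(P_i^\min)}$ then cancels the grafted minimum's contribution $(-1)^{k-1+\sum_i d(P_i^\min)}$, and both sides of the claimed identity are zero. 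This weaker statement suffices for the paper's only application: in Lemma~\ref{lem:F_min_2T_balanced}, the decomposition \eqref{eq:fiber_product_decomp} exhibits $[F_\bn^\min,2T]$ as a reduced product whose factors are completions of (fiber products of) 2-associahedra, and these are balanced by Lemmas~\ref{lem:Wn-hat_balanced} and~\ref{lem:A_fib_prod_of_Wms}, since the open part has alternating sum $1$ and the adjoined rank-$(-1)$ minimum contributes $-1$. Your proposed repair --- reading $A$ as the min-excluded sum --- also works: the identity then becomes the trivial factorization $S=S_1\cdots S_k$, and the application still goes through because the open part of $[F_\bn^\min,2T]$ sums to $\prod_i 1=1$ while its rank-$(-1)$ minimum restores the total to $0$. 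Either fix is acceptable; what is not provable is the lemma as literally stated, and you were right to balk at it rather than paper over the cross terms.
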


\begin{proof}
{\bf Step 1:}
{\it We establish the $k=2$ case.}

\medskip

\noindent
This is a straightforward computation:
\begin{align}
A\bigl(\bigl(P_1 \wt\times P_2\bigr)\setminus\{(P_1^\min,P_2^\min)\}\bigr)
&=
\sum_{F\in (P_1\setminus \{P_1^\min\})\times (P_2\setminus \{P_2^\min\})}(-1)^{d(F)}
\\
&=
\sum_{(F_1, F_2) \in ((P\setminus \{P_\min\}), (Q\setminus \{Q_\min\}))}(-1)^{d(F_1)+d(F_2)}
\nonumber
\\
& =\sum_{F_1\in (P\setminus\{P_1^\min\})} (-1)^{d(F_1)} \cdot \sum_{F_2\in (P_2\setminus\{P_2^\min\})} (-1)^{d(F_2)}
\nonumber
\\
&=
d(P_1^\min)d(P_2^\min)
=
-d\bigl((P_1^\min,P_2^\min)\bigr).
\nonumber
\end{align}

\medskip

\noindent
{\bf Step 2:}
{\it We prove the lemma.}

\medskip

\noindent
Induction, using Step 1.
\end{proof}

\subsection{$W_\bn$ is balanced}
\label{ss:Wn-hat}

In this subsection, we will prove that $\wh{W_\bn}$ is balanced.
We will derive this as a consequence of Thm.~\ref{thm:gen_functions}.

\begin{lemma}
\label{lem:FT_at_-1}
For every $r \geq 1$ and $T \in K_r$, the generating function $F_T$ satisfies the following identity when we evaluate at $t=-1$:
\begin{align}
F_T(-1,\mathbf{x}) = (-1)^p\biggl(\frac{1}{\prod_{i=1}^r(1-x_i)} -1 \biggl),
\end{align}
where we set $p \coloneqq d(T)$.
\end{lemma}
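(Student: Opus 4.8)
The plan is to prove the identity by induction on $r$, the number of leaves of $T$, feeding in the functional equations of Thm.~\ref{thm:gen_functions}(b) after specializing $t=-1$. First I would justify this specialization: for each fixed $\bn$ the coefficient of $\bx^\bn$ in $F_T$ is a polynomial in $t$ (the sum defining $A_{T,m,\bn}$ ranges over $m\geq0$ and is finite for fixed $\bn$), so $F_T(-1,\bx)$ is a well-defined element of $\bQ[[\bx]]$. Moreover, since $F_T$ involves only $\bn\neq\bzero$ we have $F_T(-1,\bzero)=0$; hence each denominator $t^p-tF_T$, resp.\ $t^{p_i}-tF_{T_i}$, specializes at $t=-1$ to $(-1)^p+F_T$, resp.\ $(-1)^{p_i}+F_{T_i}$, whose constant term $(-1)^p$, resp.\ $(-1)^{p_i}$, is a unit. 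Thus the equations \eqref{eq:tree-pair_gen_eq} remain valid after setting $t=-1$.

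For the base case $T=\bigcdot$ we have $r=1$ and $p=0$. Specializing the first equation of \eqref{eq:tree-pair_gen_eq} gives $F_\bigcdot=\frac{F_\bigcdot^2}{1+F_\bigcdot}+x$; clearing the invertible denominator $1+F_\bigcdot$ makes the quadratic terms cancel, leaving the linear equation $F_\bigcdot=x+xF_\bigcdot$, so $F_\bigcdot(-1,x)=\frac{x}{1-x}=\frac{1}{1-x}-1$, which is the asserted formula for $p=0$, $r=1$.

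For the inductive step I would write $T=C(T_1,\ldots,T_k)$ with $k\geq2$, so that each $T_i\in K_{q_i}$ has $q_i\leq r-1$ leaves and the inductive hypothesis applies. Abbreviating $G_i\coloneqq\prod_{\ell}(1-x_\ell)^{-1}$ with $\ell$ running over the leaves of $T_i$, the hypothesis yields $(-1)^{p_i}+F_{T_i}(-1,\bx)=(-1)^{p_i}G_i$, so that the $i$-th factor of the product in \eqref{eq:tree-pair_gen_eq} becomes $\frac{(-1)^{p_i}}{(-1)^{p_i}+F_{T_i}}=G_i^{-1}$. Because the leaves of $T$ are partitioned among the subtrees $T_i$, the product telescopes: $\prod_i G_i^{-1}=\prod_{\ell=1}^r(1-x_\ell)=G^{-1}$, where $G\coloneqq\prod_{\ell=1}^r(1-x_\ell)^{-1}$. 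Substituting this and $(-1)^{p-1}=-(-1)^p$ into the second equation of \eqref{eq:tree-pair_gen_eq} at $t=-1$ yields $F_T=\frac{F_T^2}{(-1)^p+F_T}-(-1)^p\bigl(G^{-1}-1\bigr)$.

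The crux is then the same cancellation as in the base case: multiplying by the unit $(-1)^p+F_T$ annihilates the $F_T^2$ term and leaves a \emph{linear} equation in $F_T$, whose unique solution one verifies to be $F_T(-1,\bx)=(-1)^p(G-1)$, exactly the claim. I expect the only delicate points to be the variable bookkeeping that makes $\prod_i G_i^{-1}=G^{-1}$ and the justification of the $t=-1$ specialization above; the algebra is otherwise forced, and it is worth noting that it requires no relation among the dimensions $p,p_1,\ldots,p_k$ or the arity $k$, since inserting the inductive formula collapses the product before the dimensions can intervene.
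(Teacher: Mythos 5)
Your proposal is correct and follows essentially the same route as the paper's proof: the base case from the first equation of \eqref{eq:tree-pair_gen_eq} at $t=-1$, then strong induction on $r$ via the concatenation $T = C(T_1,\ldots,T_k)$, where the inductive hypothesis collapses the product $\prod_i \frac{(-1)^{p_i}}{(-1)^{p_i}+F_{T_i}}$ to $\prod_{i=1}^r(1-x_i)$ (your $G^{-1}$ is the paper's $1+X$) and clearing the denominator cancels the quadratic term, leaving a linear equation for $F_T$. Your additional justification of the $t=-1$ specialization (coefficientwise polynomiality in $t$ and invertibility of the denominators' constant terms) is a point the paper leaves implicit, but it does not change the argument.
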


\begin{proof}
{\bf Step 1:}
{\it We establish the $r=1$ case.}

\medskip

\noindent
By the $T=\bullet$ case of \eqref{eq:tree-pair_gen_eq}, $F_\bullet$ satisfies
\begin{align}
F_\bullet(-1,x) = \frac{F_\bullet(-1,x)^2}{1+F_\bullet(-1,x)} + x.
\end{align}
This yields
\begin{align}
\label{eq:F_bullet_-1_formula}
F_\bullet(-1,x)
=
\frac1{1-x}-1.
\end{align}

\medskip

\noindent
{\bf Step 2:}
{\it We prove the general case by strong induction on $r$.}

\medskip

\noindent
Choose $T_i \in K_{p_i}$, $1 \leq i \leq k$, such that $T = C(T_1,\ldots,T_k)$.
For convenience, we define and simplify a quantity $X$:
\begin{align}
X
=
\prod_{i=1}^k \frac{(-1)^{p_i}}{(-1)^{p_i} + F_{T_i}}
- 1
=
\prod_{i=1}^k \frac{(-1)^{p_i}}{(-1)^{p_i} + \Bigl(\frac{(-1)^{p_i}}{\prod_{j = 1}^{\ell_i}(1-y_{ij})} + (-1)^{p_i - 1}\Bigr)} - 1
&=
\prod_{i=1}^k\prod_{j=1}^{\ell_i}(1-y_{ij}) - 1
\nonumber
\\
&=
\prod_{i=1}^r(1-x_i) - 1,
\end{align}
where in the second equality we have used the inductive hypothesis.
This allows us to deduce the inductive step of the current lemma from \eqref{eq:tree-pair_gen_eq}.
Indeed, clearing denominators in that equation yields
\begin{align}
(-1)^pF_T
=
-X + F_T(-1)^{p-1}X,
\end{align}
and solving this equation for $F_T$ yields
\begin{align}
F_T
=
\frac{-X}{(-1)^p(1 + X)}
=
\frac{(-1)^p + (-1)^{p-1}\prod_{i = 1}^r(1-x_i)}{\prod_{i=1}^r (1-x_i)}
=
(-1)^p\big(\frac{1}{\prod_{i=1}^r(1-x_i)} -1 \big).
\end{align}
\end{proof}

\begin{lemma}
\label{lem:Wn-hat_balanced}
For any $r \geq 1$ and $\bn \in \bZ_{\geq0}^r\setminus\{\bzero\}$, $\wh{W_\bn}$ is balanced.
\end{lemma}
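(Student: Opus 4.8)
The plan is to compute the total alternating sum over $\wh{W_\bn}$ directly, by specializing the generating functions $F_T$ at $t = -1$; since the exponent of $t$ records the dimension, this specialization turns a counting series into a signed count. The first step is to relate $\sum_{T \in K_r} F_T(-1,\bx)$ to the alternating sum over $W_\bn$. Because the forgetful map $\pi$ sends each element of $W_\bn$ to a unique $T \in K_r$, summing the defining series over the fibers of $\pi$ and extracting the coefficient of $\bx^\bn$ gives
\begin{align}
\Bigl[\bx^\bn\Bigr] \sum_{T \in K_r} F_T(-1,\bx)
=
\sum_{2T \in W_\bn} (-1)^{d(2T)}
=
A\bigl(W_\bn\bigr).
\end{align}

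Next I would substitute the closed form from Lemma~\ref{lem:FT_at_-1}. That formula depends on $T$ only through $p = d(T)$, so the sum factors as
\begin{align}
\sum_{T \in K_r} F_T(-1,\bx)
=
\Biggl(\sum_{T \in K_r} (-1)^{d(T)}\Biggr)
\Biggl(\frac{1}{\prod_{i=1}^r(1-x_i)} - 1\Biggr).
\end{align}
The scalar prefactor is exactly the coefficient $[x^r] f(-1,x)$, i.e.\ the alternating sum over the associahedron $K_r$. To evaluate it without invoking external polytope theory, I would specialize the functional equation \eqref{eq:RRT_gen_eq} at $t=-1$, obtaining $f = \tfrac{f^2}{1+f} + x$, which solves to $f(-1,x) = \tfrac{x}{1-x} = \sum_{r\geq1} x^r$; hence $\sum_{T \in K_r} (-1)^{d(T)} = 1$ for every $r \geq 1$.

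Putting the pieces together, $\sum_{T \in K_r} F_T(-1,\bx) = \tfrac{1}{\prod_{i=1}^r(1-x_i)} - 1$. Expanding each factor as $\tfrac{1}{1-x_i} = \sum_{n_i \geq 0} x_i^{n_i}$ shows that every coefficient of $\prod_i (1-x_i)^{-1}$ equals $1$, while the subtracted $1$ affects only the constant term; thus for $\bn \neq \bzero$ we obtain $A(W_\bn) = 1$. Finally, adjoining the formal minimal element $F_\bn^\min$, which has dimension $-1$ and therefore contributes $(-1)^{-1} = -1$, yields
\begin{align}
A\bigl(\wh{W_\bn}\bigr) = A\bigl(W_\bn\bigr) + (-1)^{-1} = 1 - 1 = 0,
\end{align}
so $\wh{W_\bn}$ is balanced.

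I expect no serious obstacle, since the heavy lifting is already carried out in Lemma~\ref{lem:FT_at_-1}; the only points requiring care are the bookkeeping in the first step — confirming that summing over the fibers of $\pi$ and setting $t = -1$ genuinely reconstructs the global signed face count of $W_\bn$ — and remembering to treat the dimension-$(-1)$ element $F_\bn^\min$ separately, so that the contribution $+1$ from $W_\bn$ and the contribution $-1$ from $F_\bn^\min$ cancel.
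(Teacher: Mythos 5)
Your proposal is correct and follows essentially the same route as the paper: both specialize the generating functions at $t=-1$, invoke Lemma~\ref{lem:FT_at_-1} to factor the sum over $T \in K_r$ into $A(K_r)$ times $\prod_i(1-x_i)^{-1} - 1$, compute $A(K_r)=1$ from the $t=-1$ specialization of \eqref{eq:RRT_gen_eq}, and cancel the resulting $A(W_\bn)=1$ against the $-1$ contributed by the formal minimal element. The only cosmetic difference is that you solve $f(-1,x)=x/(1-x)$ directly from the quadratic, while the paper cites the identical derivation of \eqref{eq:F_bullet_-1_formula}.
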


\begin{proof}
{\bf Step 1:}
{\it We show that $\wh{K_r}$ is balanced.}

\medskip

\noindent
By an argument identical to the derivation of \eqref{eq:F_bullet_-1_formula}, the generating function $f$ associated to $(K_r)$ satisfies the following identity:
\begin{align}
f(-1,x)
=
\frac1{1-x}-1
=
1 + x + x^2 + \cdots.
\end{align}
We have $A\Bigl(\wh{K_r}\Bigr) = [f(-1,x)]_{x^r} - 1$, where the first term on the right-hand side indicates the coefficient on $x^r$ in $f(-1,x)$ and where the second term comes from the minimal element in $\wh{K_r}$, so the last displayed equation implies $A\Bigl(K_r\Bigr) = 0$.

\medskip

\noindent
{\bf Step 2:}
{\it We prove the theorem.}

\medskip

\noindent
We interpret $A\Bigl(\wh{W_\bn}\Bigr)$ in terms of the generating functions $F_T$, then use Step 1 and Lemma~\ref{lem:FT_at_-1} to prove the theorem:
\begin{align}
A\Bigl(\wh{W_\bn}\Bigr)
=
\sum_{T \in K_r}
\bigl[F_T(-1,\bx)\bigr]_{\bx^\bn} - 1
\:&\sr{\text{Lem.} \ref{lem:FT_at_-1}}{=}\:
\sum_{T \in K_r}
\biggl[(-1)^{d(T)}\biggl(\frac1{\prod_{i=1}^r(1-x_i)}-1\biggr)\biggr]_{\bx^n} - 1
\\
&\hspace{0.145in}=
A(K_r)\left[\frac1{\prod_{i=1}^r (1-x_i)-1}-1\right]_{\bx^n} - 1
\nonumber
\\
&\hspace{0.145in}=
\bigl[(1+x_1+x_1^2+\cdots)\cdots(1+x_r+x_r^2+\cdots) - 1\bigr]_{\bx^n} - 1
\nonumber
\\
&\hspace{0.145in}=
0.
\nonumber
\end{align}
\end{proof}

\subsection{Intervals of the form $[F_\bn^\min, 2T]$ are balanced}
\label{ss:fib_prods}

In this subsection, we will generalize Lemma~\ref{lem:Wn-hat_balanced} to the statement that for every $2T \in W_\bn$, the sublevel set $[F_\bn^\min,2T]$ is balanced.
The key is the recursive structure of the 2-associahedra, which implies that $[F_\bn^\min, 2T]$ decomposes as a completed fiber product of products of 2-associahedra.

\begin{lemma}
\label{lem:A_fib_prod_of_Wms}
Fix $r \geq 0$ and $\bm_1,\ldots,\bm_k \in \bZ_{\geq0}^r\setminus\{\bzero\}$.
Then the following identity holds, where the expression in the alternating sum denotes a fiber product with respect to the forgetful map $\pi\colon W_{\bm_i} \to K_r$:
\begin{align}
A\biggl(\overset{K_r}{\prod}_{1\leq i\leq k}
W_{\bm_i}\biggr)
=
1.
\end{align}
\end{lemma}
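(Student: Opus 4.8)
The plan is to stratify the fiber product by the common image of its factors in the base $K_r$, and then to read off each stratum's signed contribution from the generating functions $F_T$ evaluated at $t=-1$, which were computed in Lemma~\ref{lem:FT_at_-1}.

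First I would record the rank function on the fiber product. An element of $\overset{K_r}{\prod}_{i} W_{\bm_i}$ is a tuple $(2T_1,\ldots,2T_k)$ with $2T_i \in W_{\bm_i}$ all sharing a common image $T \coloneqq \pi(2T_i) \in K_r$; writing $p \coloneqq d(T)$, the dimension of such a stratum is
\begin{align}
d(2T_1,\ldots,2T_k) = \sum_{i=1}^k d(2T_i) - (k-1)p,
\end{align}
because the relative dimension of $2T_i$ over $T$ is $d(2T_i)-p$ and the $k$ relative dimensions stack on top of the base dimension $p$. Establishing this dimension formula from the structure of the forgetful map $\pi$ in \cite{b:2ass} is the one genuinely geometric input; everything downstream is a formal manipulation of the $F_T$.

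Granting the rank formula, I would split the alternating sum according to the common base $T$, at which point the sum over tuples factorizes because the $2T_i$ range independently once $T$ is fixed:
\begin{align}
A\biggl(\overset{K_r}{\prod}_{1\leq i\leq k} W_{\bm_i}\biggr) = \sum_{T \in K_r} (-1)^{-(k-1)d(T)} \prod_{i=1}^k \Biggl(\sum_{{2T_i \in W_{\bm_i},}\atop{\pi(2T_i)=T}} (-1)^{d(2T_i)}\Biggr).
\end{align}
The inner sum is precisely the coefficient extraction $\bigl[F_T(-1,\bx)\bigr]_{\bx^{\bm_i}}$. By Lemma~\ref{lem:FT_at_-1} we have $F_T(-1,\bx) = (-1)^{d(T)}\bigl(\tfrac{1}{\prod_j(1-x_j)} - 1\bigr)$, and since every $\bm_i \neq \bzero$, the coefficient of $\bx^{\bm_i}$ in $\tfrac{1}{\prod_j(1-x_j)} = \prod_j(1+x_j+x_j^2+\cdots)$ equals $1$; hence $\bigl[F_T(-1,\bx)\bigr]_{\bx^{\bm_i}} = (-1)^{d(T)}$.

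Substituting, each of the $k$ factors contributes $(-1)^{d(T)}$, so the product is $(-1)^{k\,d(T)}$ and the prefactor $(-1)^{-(k-1)d(T)}$ combines with it to leave $(-1)^{d(T)}$. This collapses the whole expression to the alternating sum over $K_r$:
\begin{align}
A\biggl(\overset{K_r}{\prod}_{1\leq i\leq k} W_{\bm_i}\biggr) = \sum_{T \in K_r} (-1)^{d(T)} = \bigl[f(-1,x)\bigr]_{x^r} = 1,
\end{align}
where the last two equalities reuse Step 1 of the proof of Lemma~\ref{lem:Wn-hat_balanced}, namely $f(-1,x) = \tfrac{1}{1-x}-1 = x + x^2 + \cdots$, whose $x^r$-coefficient is $1$ for $r \geq 1$ (the case $r=0$ is vacuous, since $\bZ_{\geq0}^0\setminus\{\bzero\} = \emptyset$). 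I expect the main obstacle to be entirely the dimension-formula bookkeeping of the first step: once a fiber-product stratum is correctly assigned the dimension $\sum_i d(2T_i) - (k-1)d(T)$, the cancellation is forced by Lemma~\ref{lem:FT_at_-1} together with the count $\sum_{T\in K_r}(-1)^{d(T)} = 1$.
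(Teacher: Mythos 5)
Your proposal is correct and follows essentially the same route as the paper: stratify the fiber product over the common image $T \in K_r$, use the rank formula $d(F_1,\ldots,F_k) = \sum_i d(F_i) - (k-1)d(T)$ to factor each stratum's alternating sum into fiber sums, evaluate each fiber sum as $\bigl[F_T(-1,\bx)\bigr]_{\bx^{\bm_i}} = (-1)^{d(T)}$, and conclude with $\sum_{T \in K_r}(-1)^{d(T)} = \bigl[f(-1,x)\bigr]_{x^r} = 1$. If anything, your write-up is slightly more explicit than the paper's, which compresses the fiber-sum evaluation into the terse expression $(-1)^{d(T)}A(W_{\bm_i})$ where you invoke Lemma~\ref{lem:FT_at_-1} directly.
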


\begin{proof}
{\bf Step 1:}
{\it We compute $A\bigl(\pi_{\bm_1}^{-1}(T) \times \cdots \times \pi_{\bm_k}^{-1}(T)\bigr)$, where $T$ is any element of $K_r$ and $\pi_{\bm_i} \colon W_{\bm_i} \to K_r$ is the forgetful map.}

\medskip

\noindent
\begin{align}
A\bigl(\pi_{\bm_1}^{-1}(T) \times \cdots \times \pi_{\bm_k}^{-1}(T)\bigr)
=
\sum_{{F_i \in W_{\bm_i},}
\atop
{1\leq i\leq k}}
(-1)^{d(F_1,\ldots,F_k)}
&=
\sum_{{F_i \in W_{\bm_i},}
\atop
{1\leq i\leq k}}
(-1)^{d(F_1)+ \cdots + d(F_k) - (k-1)d(T)}
\nonumber
\\
&=
(-1)^{-(k-1)d(T)}
\prod_{i=1}^k
\sum_{F_i\in \pi^{-1}_{\bm_i}(T)}
(-1)^{d(F_i)}
\nonumber
\\
&=
(-1)^{-(k-1)d(T)}
\prod_{i=1}^k
(-1)^{d(T)}A(W_{\bm_i})
\nonumber
\\
&=
(-1)^{d(T)}.
\nonumber
\end{align}

\medskip

\noindent
{\bf Step 2:}
{\it We prove the lemma.}

\noindent
\begin{align}
A\biggl(\overset{K_r}{\prod}_{1\leq i\leq k}
W_{\bm_i}\biggr)
=
A\biggl(\bigsqcup_{T \in K_r}
\bigl(\pi_{\bm_1}^{-1}(T)\times\cdots\times\pi_{\bm_r}^{-1}(T)\bigr)\biggr)
&=
\sum_{T \in K_r}
A\bigl(\pi_{\bm_1}^{-1}(T) \times \cdots \times \pi_{\bm_k}^{-1}(T)\bigr)
\nonumber
\\
&=
\sum_{T \in K_r} (-1)^{d(T)}
\\
&=
1.
\nonumber
\end{align}
\end{proof}

\begin{lemma}
\label{lem:F_min_2T_balanced}
Fix $r\geq1$, $\bn \in \bZ_{\geq0}^r\setminus\{\bzero\}$, and $2T \in W_\bn$.
Then the sublevel set $[F_\bn^\min,2T]$ is balanced.
\end{lemma}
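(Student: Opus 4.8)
The plan is to identify $[F_\bn^\min, 2T]$ with the completion of a product of fiber products of 2-associahedra, and then to evaluate its alternating sum factor-by-factor using Lemmas~\ref{lem:redprod} and~\ref{lem:A_fib_prod_of_Wms}. Write $T_s \coloneqq \pi(2T) \in K_r$ for the seam tree underlying $2T$, and let $v_1,\dots,v_s$ be its internal vertices, with $v_\alpha$ having $c_\alpha$ children. Recall that the faces of $K_r$ lying below $T_s$ form a poset isomorphic to $\prod_\alpha K_{c_\alpha}$, so that seam-refinements of $T_s$ occur independently at the vertices $v_\alpha$. The recursive structure of the 2-associahedra (\eqref{eq:fiber_product_decomp}; see also \cite{b:2ass}) refines this picture one level up: a face $2S \leq 2T$ is specified by a seam-refinement at each $v_\alpha$, valued in $K_{c_\alpha}$, together with a compatible resolution over $v_\alpha$ of the bubbles that $2T$ carries there, and these resolutions assemble into a fiber product over $K_{c_\alpha}$ of 2-associahedra $W_{\bm_{\alpha j}}$ (with $\bm_{\alpha j} \in \bZ_{\geq0}^{c_\alpha}$ read off from $2T$). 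I would first make this precise as an isomorphism of ranked posets
\[
[F_\bn^\min, 2T] \;\cong\; \wh{P_1} \:\wt\times\: \cdots \:\wt\times\: \wh{P_s}, \qquad P_\alpha \coloneqq \overset{K_{c_\alpha}}{\prod}_{1 \leq j \leq k_\alpha} W_{\bm_{\alpha j}},
\]
where $\wh{P_\alpha}$ adjoins to the fiber product $P_\alpha$ a formal dimension-$(-1)$ minimum. (When $2T = 2T_\bn^\max$ one has $s=1$ and $P_1 = W_\bn$, so this specializes to Lemma~\ref{lem:Wn-hat_balanced}.)

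Granting this isomorphism, the computation is short. By Lemma~\ref{lem:A_fib_prod_of_Wms} we have $A(P_\alpha) = 1$ for every $\alpha$, and since the adjoined minimum has dimension $-1$ it follows that
\[
A(\wh{P_\alpha}) = A(P_\alpha) + (-1)^{-1} = 1 - 1 = 0.
\]
The minimal element of a reduced product of $s$ completed posets lands in dimension $\sum_\alpha(-1) + (s-1) = -1$, which confirms that $\wh{P_1} \wt\times \cdots \wt\times \wh{P_s}$ is genuinely the completion of $\prod_\alpha P_\alpha$ by a dimension-$(-1)$ minimum and hence matches $[F_\bn^\min, 2T]$. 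Lemma~\ref{lem:redprod} then yields
\[
A\bigl([F_\bn^\min, 2T]\bigr) = A\bigl(\wh{P_1} \wt\times \cdots \wt\times \wh{P_s}\bigr) = \prod_{\alpha=1}^s A(\wh{P_\alpha}) = 0,
\]
using that $s \geq 1$ since $T_s$ has at least one internal vertex.

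The entire substance of the argument is the poset isomorphism \eqref{eq:fiber_product_decomp}; once it is in place the alternating-sum bookkeeping above is routine, and this is where I expect the real difficulty to lie. The obstacle is combinatorial: one must unwind the partial order on $W_\bn$ and the forgetful map $\pi$ from \cite{b:2ass} to check that a face below $2T$ is determined by exactly the data (i) an independent seam-refinement at each $v_\alpha$ and (ii) a compatible bubble-resolution over each $v_\alpha$, and that these package into the stated fiber products with ranks adding correctly — the latter point amounts to reconciling the dimension formula of Def.-Prop.~\ref{defprop:2concat} with the rank convention on the reduced product. I would also dispatch the degenerate cases: when $r=1$ the base $K_{c_\alpha}$ may be a point, and a vertex $v_\alpha$ carrying no bubbling contributes the factor $P_\alpha = K_{c_\alpha}$; in each such case $A(P_\alpha) = 1$ still holds (it is the empty fiber product of Lemma~\ref{lem:A_fib_prod_of_Wms}, equivalently $A(K_{c_\alpha}) = 1$), so $A(\wh{P_\alpha}) = 0$ and the conclusion is unaffected.
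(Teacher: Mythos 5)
Your overall strategy is exactly the paper's: the paper proves this lemma in two lines, by invoking the decomposition \eqref{eq:fiber_product_decomp} (the \textsc{(recursive)} part of \cite[Thm.~4.1]{b:2ass}) and then applying Lemmas~\ref{lem:redprod} and \ref{lem:A_fib_prod_of_Wms}. The alternating-sum bookkeeping you spell out --- that the adjoined minimum of the reduced product sits in rank $-1$, that $A(\wh{P_\alpha}) = A(P_\alpha) + (-1)^{-1} = 0$, and that Lemma~\ref{lem:redprod} multiplies these --- is correct, and is precisely what the paper leaves implicit.

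However, your statement of the decomposition is inaccurate, and as literally written your claimed isomorphism is false. The paper's \eqref{eq:fiber_product_decomp} contains, in addition to the fiber products indexed by the internal vertices $\rho \in V_\inte(T_s)$ (your $v_\alpha$), a separate product of factors $W_{\#\!\incom(\beta)}^\tree$ indexed by the one-seam component vertices $\alpha \in V_\comp^1(T_b)$, $\incom(\alpha) = (\beta)$. These are the bubble components of $2T$ carrying exactly one seam (e.g.\ a bubble into which some marked points on a single line have escaped); they are not organized by the internal vertices of $T_s$, and they cannot be absorbed into your fiber products, since every factor of the fiber product over $K_{c_\alpha}$ is a 2-associahedron $W_{\bm_{\alpha j}}$ with $\bm_{\alpha j} \in \bZ_{\geq0}^{c_\alpha}$, i.e.\ corresponds to a component carrying all $c_\alpha$ seams. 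So whenever $2T$ has a one-seam bubble --- which happens for most faces once some $n_i \geq 2$ --- your right-hand side undercounts $(F_\bn^\min, 2T]$. The omission is harmless for the conclusion: each $W_m^\tree$ is isomorphic to $W_m \simeq K_m$ (the $r=1$ case), so it too has alternating sum $1$ (Lemma~\ref{lem:A_fib_prod_of_Wms} with $r = k = 1$, or Step 1 of the proof of Lemma~\ref{lem:Wn-hat_balanced}), its completion has alternating sum $0$, and Lemma~\ref{lem:redprod} still yields $A\bigl([F_\bn^\min, 2T]\bigr) = 0$. So your proof becomes correct once the decomposition is stated in the paper's form; I will also note that the degenerate case you worry about (an internal vertex of $T_s$ ``carrying no bubbling'') does not actually arise, since every internal vertex of $T_s$ has at least one component of $T_b$ over it.
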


\begin{proof}
By the \textsc{(recursive)} part of \cite[Thm.~4.1]{b:2ass}, the following posets are isomorphic:
\begin{align}
\label{eq:fiber_product_decomp}
\prod_{
{\alpha \in V_\comp^1(T_b),}
\atop
{\incom(\alpha)=(\beta)}
}
W_{\#\!\incom(\beta)}^\tree
\times
\prod_{\rho \in V_\inte(T_s)} \prod^{K_{\#\!\incom(\rho)}}_{
{\alpha\in V_\comp^{\geq2}(T_b)\cap \pi^{-1}\{\rho\},}
\atop
{\incom(\alpha)=(\beta_1,\ldots,\beta_{\#\!\incom(\rho)})}
}
\hspace{-0.25in} W_{\#\!\incom(\beta_1),\ldots,\#\!\incom(\beta_{\#\!\incom(\alpha)})}
\simeq
(F_\bn^\min,2T]
\subset
W_\bn
\end{align}
It now follows from Lemmas~\ref{lem:redprod} and \ref{lem:A_fib_prod_of_Wms} that $A\bigl([F_\bn^\min,2T]\bigr) = 0$.
\end{proof}

\subsection{All remaining intervals are balanced}
\label{ss:remaining_intervals}

\begin{definition}
Fix a 2-bracketing $(\sB,\stB) \in W_\bn^\br$.
A bracket $B \in \sB$ is \emph{removable} if it is not a singleton and does not contain all elements of $(1,\ldots,r)$.
A 2-bracket $\btB  \in \stB$ is \emph{removable} if it is not a singleton and is not the maximal 2-bracket $\btB_\bn^\max \coloneqq \{(i,j) \:|\: 1\leq i\leq r, 1\leq j\leq n_i\} \subset \stB$.
\end{definition}

\begin{lemma}
\label{lem:remaining_intervals_superlevel_set}
Fix $r \geq 1$, $\bn \in \bZ_{\geq0}^r\setminus\{\bzero\}$, and $\bigl(\sB, \stB\bigr) \in W_\bn \setminus \{F_\bn^\max\}$.
Then the interval
\begin{align}
I
=
\bigl[\bigl(\sB,\stB\bigr), F_\bn^\max\bigr]
\subset
W_\bn^\br
\end{align}
is balanced.
\end{lemma}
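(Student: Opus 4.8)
The plan is to exhibit the superlevel set $I = \bigl[(\sB,\stB), F_\bn^\max\bigr] \subset W_\bn^\br$ as a reduced product of strictly smaller posets, each of which we have \emph{already} shown to be balanced, and then to read off $A(I)=0$ from Lemma~\ref{lem:redprod}. An element of $I$ is a $2$-bracketing obtained from $(\sB,\stB)$ by deleting some subcollection of its removable brackets and removable $2$-brackets, and its rank in $I$ records the number of deletions performed; the hypothesis $(\sB,\stB)\neq F_\bn^\max$ guarantees that at least one such deletion is available, so $I$ is a genuinely nontrivial interval. The balanced building blocks I will feed into Lemma~\ref{lem:redprod} are the completed $2$-associahedra $\wh{W_{\bn'}}$, balanced by Lemma~\ref{lem:Wn-hat_balanced}, together with their associahedral counterparts $\wh{K_{r'}}$, whose balancedness is exactly the content of Step~1 of that proof.

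The first and principal step is to run the \textsc{(recursive)} description of \cite[Thm.~4.1]{b:2ass} through the bracket structure of $\sB$ in order to localize the deletions. Concretely, I would show that the deletions available at a single bracket $B\in\sB$—the merging of the child-clusters that $B$ separates, together with the coarsening of the $2$-brackets seated at the level of $B$—do not interact with the deletions available at any other bracket. This should identify $I$, as a ranked poset, with a reduced product of factors $P_B$, one for each bracket $B\in\sB$ at which a nontrivial coarsening is possible, where each $P_B$ is the completion of a strictly smaller $2$-associahedron (or, where only the seam coarsens, of an associahedron). The crucial contrast with Lemma~\ref{lem:F_min_2T_balanced}—and the reason the present case is \emph{conceptually simpler}—is that passing up to $F_\bn^\max$ dissolves the seam clustering entirely, so the fiber constraint over $K_r$ that forced the fiber-product decomposition \eqref{eq:fiber_product_decomp} (and the appeal to Lemma~\ref{lem:A_fib_prod_of_Wms}) disappears, leaving an honest product with one factor per bracket.

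Granting this decomposition, the lemma follows at once: Lemma~\ref{lem:redprod} gives $A(I)=\prod_{B} A(P_B)$, every factor $P_B$ is balanced—the $2$-associahedral factors by Lemma~\ref{lem:Wn-hat_balanced} and the associahedral ones by Step~1 of its proof—and since $(\sB,\stB)\neq F_\bn^\max$ there is at least one factor, so the product, being a product of zeros, vanishes.

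\textbf{The main obstacle} is the decomposition itself, and specifically disentangling the brackets of $\sB$ from the $2$-brackets of $\stB$. In the purely associahedral setting, deleting brackets is an unconstrained operation, but for $2$-bracketings the availability of a $2$-bracket deletion can be coupled to the surrounding seam, so the real work is to check, directly from the definition of the partial order on $W_\bn^\br$, both that the deletions at distinct brackets are mutually independent and that each local factor $P_B$ is \emph{exactly} a completed smaller $2$-associahedron (or associahedron), with the ranks adding up correctly. Once the factors are correctly identified, balancedness of $I$ is a purely formal consequence of the results already in hand.
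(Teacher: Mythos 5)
Your high-level instinct---decompose $I$ bracket-by-bracket and multiply alternating sums---matches the paper's Step 2, but the identification your whole argument rests on is false, and it is exactly the point you defer as ``the main obstacle.'' The factors in the paper's per-bracket decomposition are \emph{not} completed 2-associahedra $\wh{W_{\bn'}}$ (nor completed associahedra); they are again superlevel-set intervals $\bigl[\bigl(\sB(B),\stB(B)\bigr),F_{\bn(B)}^\max\bigr]$, and proving that \emph{these} are balanced is the real content of the lemma (the paper's Step 1). There is a structural reason your identification cannot work: sublevel sets $[F_\bn^\min,2T]$ do decompose into (fiber) products of smaller 2-associahedra by the \textsc{(recursive)} part of \cite[Thm.~4.1]{b:2ass}---that is what Lemma~\ref{lem:F_min_2T_balanced} exploits---but superlevel sets are face-figure-like objects and admit no such recursion. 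Concretely, the \textsc{(marked seams are unfused)} condition couples the deletions of distinct 2-brackets: in the paper's decomposition \eqref{eq:decomp_in_superlevel_set_only_wide}, if the 2-bracket $\btB$ is retained, then the 2-bracketing outside $\btB$ is forbidden from coarsening all the way to $F_{\bn''}^\max$. A product of completed posets (reduced or otherwise) contains every combination of factor-values, so no such product can encode this constraint; this is why the paper's decomposition is a disjoint union of a product-with-top-element-removed and a second interval, and why its proof is a double induction (on $r$ and on the number of 2-brackets) in which the pieces are balanced by the inductive hypothesis rather than by Lemma~\ref{lem:Wn-hat_balanced}.

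Beyond this central gap: (i) your claimed independence of deletions across distinct brackets also fails in general---2-brackets of the form $\btB=\bigl(B,(2B_i)\bigr)$ with $2B_i$ everything over $i$ tie the 2-bracket level to the bracket level, and the paper needs a separate case analysis (its Step 3, via \eqref{eq:decomp_in_superlevel_set_take_off_big_bubble}) to remove them before the per-bracket product (its Step 2) becomes available; (ii) the reduced product is the wrong operation here, since the bottom of $I$ is a genuine 2-bracketing rather than a formal minimum---the paper's Step 2 uses an ordinary product of intervals, each containing its own bottom; and (iii) even granting some product decomposition, none of the lemmas established earlier in the paper gives balancedness of superlevel-type factors, so the appeal to Lemma~\ref{lem:Wn-hat_balanced} cannot close the argument: a fresh induction on the superlevel intervals themselves is unavoidable, and that induction is essentially the entire proof.
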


\begin{proof}
We will induct on $r$, and subinduct on the number of 2-brackets in $\stB$.
If $r = 1$, this lemma follows from the identification $W_n \simeq K_n$ made in \cite[Lemma~3.9]{b:2ass}.
In the remainder of the proof, we fix $\bigl(\sB,\stB\bigr) \in W_\bn^\br$ for $r \geq 2$, $\bn \in \bZ_{\geq0}^r$, and use the inductive hypothesis to show that $I$ is balanced.

\medskip

\noindent
{\bf Step 1:}
{\it We prove that $I$ is balanced in the case that $\sB$ contains no removable brackets and every removable 2-bracket $\btB \in \stB$ has $\pi(\btB) = (1,\ldots,r)$.}

\medskip

\noindent
We establish this step by strong induction on $d\bigl(F_\bn^\max\bigr) - d\bigl(\sB,\stB\bigr)$.
This step is trivial when this difference in dimension is $1$.

Next, consider the case when this difference is at least $2$.
There must then be a 2-bracket $\btB \in \stB$ which properly contains another 2-bracket in $\stB$, and which is maximal in $\stB \setminus \{\btB_\bn^\max\}$.
Indeed, if there were no such 2-bracket, then every 2-bracket in $\stB \setminus \{\btB_\bn^\max\}$ would be minimal, so by \cite[Def.~3.12, \textsc{(marked seams are unfused)}]{b:2ass}, we would have $d\bigl(\sB,\stB\bigr) = d\bigl(F_\bn^\max\bigr) - 1$.

Denote by $\stB'$ the 2-brackets that $\btB$ contains, and by $\stB''$ the 2-brackets other than $\btB_\bn^\max$ that $\btB$ does not include.
Then $\bigl(\sB,\stB'\bigr) \in W_{\bn'}^\br$, $\bigl(\sB,\stB''\bigr) \in W_{\bn''}^\br$ are legal 2-bracketings.
Using this notation, we can decompose decompose $I$ according to whether an element of $I$ does or does not contain $\btB$:
\begin{align}
\label{eq:decomp_in_superlevel_set_only_wide}
I
&=
\bigl[\bigl(\sB,\stB\bigr), F_\bn^\max\bigr]
\\
&\simeq
\Bigl(
\bigl[\bigl(\sB,\stB'\bigr),F_{\bn'}^\max\bigr]
\times
\bigl(\bigl[\bigl(\sB,\stB''\bigr),F_{\bn''}^\max\bigr]\setminus\{F_{\bn''}^\max\}\bigr)
\Bigr)
\sqcup
\bigl[\bigl(\sB,\stB\setminus\{\btB\}\bigr), F_\bn^\max\bigr].
\nonumber
\end{align}
Indeed, given an element of the product that appears on the left-hand side of the final expression, we can insert the 2-bracketing in $W_{\bn'}^\br$ into $\btB$ and use the 2-bracketing in $W_{\bn''}^\br$ to bracket the remaining elements.
We do not allow the 2-bracketing in $W_{\bn''}^\br$ to be the top element, because the presence of $\btB$ would then violate the \textsc{(marked seams are unfused)} property.
On the other hand, an element in the right-hand part of the final expression in \eqref{eq:decomp_in_superlevel_set_only_wide} is simply an element of $I$ that does not include $\btB$.

It follows from \eqref{eq:decomp_in_superlevel_set_only_wide} and the inductive hypothesis that $I$ is balanced.

\medskip

\noindent
{\bf Step 2:}
{\it We prove that $I$ is balanced in the case that $\stB$ does not contains a 2-bracket of the form $\btB = \bigl(B,(2B_i)\bigr) \neq \btB_\bn^\max$ with $2B_i = \{(i,j) \:|\: 1 \leq j \leq n_i\}$ for all $i$.}

\medskip

\noindent
Fix $B \in \sB$, and set $s \coloneqq \#\!B$ and $\bn(B) \coloneqq (n_i)_{i \in B} \in \bZ_{\geq0}^s$.
We can then define a 2-bracketing $\bigl(\sB(B), \stB(B)\bigr)$ like so:
\begin{align}
\label{eq:decomp_in_superlevel_set_by_bracket}
\sB(B)
\coloneqq
\{(i)\}_{i \in B}
\cup
\{B\},
\qquad
\stB(B)
\coloneqq
\{(i,j)\}_{{i \in B,}
\atop
{1 \leq j \leq n_i}}
\cup
\bigl(\stB \cap \pi^{-1}\{B\}\bigr)
\cup
\{\btB_{\bn(B)}^\max\}.
\end{align}
That is, $\bigl(\sB(B),\stB(B)\bigr)$ contains only those nonremovable elements of $\stB$ that lie over $B$.
Using this construction, we can decompose $I$ in terms of the brackets in $\sB$:
\begin{align}
I
\simeq
\prod_{B \in \sB}
\Bigl[\bigl(\sB(B),\stB(B)\bigr),F_{\bn(B)}^\max\Bigr].
\end{align}
Indeed, given an element of this product, we can take the union of all the brackets and 2-brackets to obtain an element of $I$, and this correspondence is clearly bijective.

It follows from \eqref{eq:decomp_in_superlevel_set_by_bracket} and Step 1 that $I$ is balanced.

\medskip

\noindent
{\bf Step 3:}
{\it We prove the lemma.}

\medskip

\noindent
If $I$ satisfies the hypothesis of Step 2, we are done.
Otherwise, fix a 2-bracket $\btB \in \stB$ of the sort excluded in Step 2.
If $\btB$ does not properly contain any element of $\stB \cap \pi^{-1}\{B\}$, then we have
\begin{align}
I
\simeq
\bigl[\bigl(\sB,\stB\setminus\{\btB\}\bigr),
F_\bn^\max\bigr],
\end{align}
so $I$ is balanced by the inductive hypothesis.

Next, suppose that $\btB$ does properly contain an element of $\stB \cap \pi^{-1}\{B\}$.
Similarly to \eqref{eq:decomp_in_superlevel_set_only_wide}, we can decompose $I$ according to whether an element of $I$ does or does not contain $\btB$:
\begin{align}
\label{eq:decomp_in_superlevel_set_take_off_big_bubble}
I
\simeq
\bigl[\bigl(\sB,\stB\setminus\{\btB\}\bigr),F_\bn^\max\bigr]
\sqcup
\bigl[\bigl(\sB,\stB\setminus\{\btB\}\bigr),F_\bn^\max\bigr].
\end{align}
Indeed, if we take an element of the first copy of $\bigl[\bigl(\sB,\stB\setminus\{\btB\}\bigr),F_\bn^\max\bigr]$, we can add $\btB$ to produce a 2-bracketing in $I$ that includes $\btB$.
An element of the second copy of $\bigl[\bigl(\sB,\stB\setminus\{\btB\}\bigr),F_\bn^\max\bigr]$ corresponds to a 2-bracketing in $I$ that does not include $\btB$.

It follows from \eqref{eq:decomp_in_superlevel_set_take_off_big_bubble} and the inductive hypothesis that $I$ is balanced.
\end{proof}

\begin{lemma}
\label{lem:remaining_intervals}
Fix $r \geq 1$, $\bn \in \bZ_{\geq 0}^r\setminus \{\bzero\}$, and $\bigl(\sB^{(i)}, \stB^{(i)}\bigr) \in W_{\bn}^\br$ for $i \in \{1,2\}$ with
\begin{align}
\bigl(\sB^{(2)}, \stB^{(2)}\bigr) < \bigl(\sB^{(1)}, \stB^{(1)}\bigr).
\end{align}
Then the interval $I = \bigl[\bigl(\sB^{(2)}, \stB^{(2)}\bigr), \bigl(\sB^{(1)}, \stB^{(1)}\bigr)\bigr]$ is balanced.
\end{lemma}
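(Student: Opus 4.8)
The plan is to reduce this general interval to the super-level sets already handled in Lemma~\ref{lem:remaining_intervals_superlevel_set}, by means of a product decomposition governed by the coarse (top) element $\bigl(\sB^{(1)},\stB^{(1)}\bigr)$. Concretely, I would show that $I$ is isomorphic, as a ranked poset, to an ordinary product
\[
I \simeq \prod_\alpha \bigl[\bigl(\sB_\alpha,\stB_\alpha\bigr),\, F_{\bn_\alpha}^{\max}\bigr],
\]
where $\alpha$ ranges over the internal structure of $\bigl(\sB^{(1)},\stB^{(1)}\bigr)$ — that is, over its brackets and its 2-brackets — and each factor is a super-level set inside a smaller 2-associahedron $W_{\bn_\alpha}$. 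The local bottom $\bigl(\sB_\alpha,\stB_\alpha\bigr)$ records precisely how the piece of the configuration indexed by $\alpha$ gets refined as one passes from $\bigl(\sB^{(1)},\stB^{(1)}\bigr)$ down to $\bigl(\sB^{(2)},\stB^{(2)}\bigr)$. This is the 2-associahedral analogue of the classical fact that an interval $[T_1,T_2]$ in the face poset of an associahedron is the Boolean lattice on the internal edges of $T_1$ not already present in $T_2$, and that this Boolean lattice factors as a product of upper intervals indexed by the internal vertices of $T_2$; it also refines the bracket-indexed decomposition used in Step 2 of Lemma~\ref{lem:remaining_intervals_superlevel_set}, which is the special case in which the top element is $F_\bn^{\max}$.

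Granting the decomposition, the conclusion is immediate. The rank on $W_\bn^\br$ is additive under this isomorphism up to a single global constant, as must hold for any isomorphism of ranked posets restricted to a connected interval; hence the alternating sum is multiplicative up to an overall sign, exactly as in the factorization carried out in Step 1 of the proof of Lemma~\ref{lem:redprod}, giving $A(I) = \pm\prod_\alpha A\bigl([(\sB_\alpha,\stB_\alpha),F_{\bn_\alpha}^{\max}]\bigr)$. Because $\bigl(\sB^{(2)},\stB^{(2)}\bigr) < \bigl(\sB^{(1)},\stB^{(1)}\bigr)$, at least one piece is genuinely refined, so at least one local bottom $\bigl(\sB_\alpha,\stB_\alpha\bigr)$ differs from the corresponding $F_{\bn_\alpha}^{\max}$; that factor is balanced by Lemma~\ref{lem:remaining_intervals_superlevel_set}, which forces the entire product, and therefore $A(I)$, to vanish.

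The main work, and the step I expect to be the real obstacle, is establishing the product decomposition itself, for which I would appeal to the \textsc{(recursive)} part of \cite[Thm.~4.1]{b:2ass}, just as in the proof of Lemma~\ref{lem:F_min_2T_balanced} and its decomposition \eqref{eq:fiber_product_decomp}. The subtlety specific to 2-associahedra is the two-level bracket/2-bracket structure: one must decompose simultaneously along the seam tree (the brackets $\sB^{(1)}$) and the bubble tree (the 2-brackets $\stB^{(1)}$), and verify that the resulting factors really are super-level sets in honest 2-associahedra $W_{\bn_\alpha}$ with the claimed local bottoms. I would carry this out by induction on $r$, subinducting on the number of 2-brackets of the top element so as to mirror Lemma~\ref{lem:remaining_intervals_superlevel_set}, peeling off one maximal bracket or 2-bracket of $\bigl(\sB^{(1)},\stB^{(1)}\bigr)$ at a time and splitting $I$ according to whether a given element retains that datum, so that the product structure emerges one factor at a time. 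Once this bookkeeping confirms that each factor is a genuine super-level set of the expected type, the alternating-sum computation above completes the proof.
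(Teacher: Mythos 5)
There is a genuine gap, and it sits exactly at the step you flag as the main obstacle: the ordinary product decomposition you posit is false in general. The recursive structure you appeal to --- the \textsc{(recursive)} part of \cite[Thm.~4.1]{b:2ass}, i.e.\ \eqref{eq:fiber_product_decomp} --- does not express the part of $W_\bn$ below $\bigl(\sB^{(1)},\stB^{(1)}\bigr)$ as an ordinary product of pieces indexed by the brackets and 2-brackets of the top element. It expresses it as a product of \emph{fiber products}: for each bracket of $\sB^{(1)}$ (each internal vertex $\rho$ of the seam tree $T_s$), one takes the fiber product over the associahedron $K_{\#\!\incom(\rho)}$ of the 2-associahedra attached to the 2-brackets of $\stB^{(1)}$ lying over that bracket. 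The coupling this encodes is essential: if two non-singleton 2-brackets of $\stB^{(1)}$ lie over the same bracket of $\sB^{(1)}$, then refining them below $\bigl(\sB^{(1)},\stB^{(1)}\bigr)$ forces both refinements to induce one and the same refinement of the underlying bracket, i.e.\ to project to the same element of the relevant associahedron. Hence $I$ does not split as a product over the 2-brackets; the fiber product is a proper subposet of the ordinary product as soon as the relevant associahedron is nontrivial. For the same reason your multiplicativity step collapses: alternating sums are multiplicative over (reduced) products (Lemma~\ref{lem:redprod}) but not over fiber products --- this is precisely why the paper proves Lemma~\ref{lem:A_fib_prod_of_Wms} by an explicit summation of $(-1)^{d(T)}$ over $T \in K_r$ rather than by any product formula.

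The paper's proof instead keeps the fiber products in place: by \eqref{eq:fiber_product_decomp}, it suffices to show that in any fiber product $W_{\bm_1}\times_{K_r}\cdots\times_{K_r}W_{\bm_k}$, every nontrivial interval with upper bound $\bigl(F_{\bm_1}^\max,\ldots,F_{\bm_k}^\max\bigr)$ is balanced, and this is established by rerunning the argument of Lemma~\ref{lem:remaining_intervals_superlevel_set} --- removing removable brackets and 2-brackets, and splitting the interval according to whether an element contains a given 2-bracket --- in the fiber-product setting, where those decompositions handle the coupling directly instead of factoring it away. Your closing suggestion of an induction that peels off one bracket or 2-bracket of the top element at a time is in fact the right mechanism; but it must be deployed as the proof of balancedness for these fiber-product intervals, not as a route to an ordinary product of super-level sets in honest 2-associahedra $W_{\bn_\alpha}$, because no such product decomposition exists.
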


\begin{proof}
By \eqref{eq:fiber_product_decomp}, it suffices to show that in any fiber product $W_{\bm_1}\times_{K_r}\cdots\times_{K_r}W_{\bm_k}$, any nontrivial interval with upper bound $\bigl(F_{\bm_1}^\max,\ldots,F_{\bm_k}^\max\bigr)$ is balanced.
This follows from an argument very similar to our proof of Lemma~\ref{lem:remaining_intervals_superlevel_set}.
\end{proof}

\end{document}